\newcommand{\lcucolor}{0.1,0.1,0.9}
\crefname{figure}{Figure}{Figures}
\newtheorem{theorem}{Theorem}
\newtheorem{lemma}[theorem]{Lemma}
\newcommand{\todob}[2][]{\todo[color=blue!20!white,#1]{BG: #2}}
\newcommand{\setN}{\mathbb{N}} % set of natural numbers (without zero)
\newcommand{\setR}{\mathbb{R}} % set of real numbers
\newcommand{\setX}{\mathbb{X}}
\newcommand{\setY}{\mathbb{Y}}
\newcommand{\graph}{\textrm{graph}} % graph of a set-valued function
\newcommand{\cvxhull}{\textrm{conv}} % convex hull
\newcommand{\defeq}{\doteq} % definition
\newcommand{\T}{\top} % transpose
\newcommand{\argmin}{\mathop{\mathrm{argmin}}}
\newcommand{\Ordo}{O} % big O notation
\newcommand{\norm}[1]{\left\lVert#1\right\rVert}
\newcommand{\inorm}[1]{\norm{#1}_\infty} % max norm (inf-norm)
\newcommand{\E}{\mathbb{E}} % expected value
\newcommand{\median}{\textrm{median}} % median
\renewcommand{\vec}[1]{{\boldsymbol{#1}}}
\newcommand{\vzero}{\vec{0}} % full zero vector
\newcommand{\vx}{\vec{x}}
\newcommand{\vy}{\vec{y}}
\newcommand{\vz}{\vec{z}}
\newcommand{\va}{\vec{a}}
\newcommand{\vb}{\vec{b}}
\newcommand{\vc}{\vec{c}}
\newcommand{\vu}{\vec{u}}
\newcommand{\vh}{\vec{h}}
\newcommand{\vk}{\vec{k}}
\newcommand{\vr}{\vec{r}}
\newcommand{\Id}{I}
\newcommand{\vX}{\vec{\mathcal{X}}}
\newcommand{\X}{\mathcal{X}}
\newcommand{\Y}{\mathcal{Y}}
\newcommand{\Z}{\mathcal{Z}}
\newcommand{\vD}{\vec{D}}
\newcommand{\vZ}{\vec{\Z}}
\newcommand{\pol}{\pi}
\newcommand{\hpol}{\hat{\pol}}
\newcommand{\setXh}{\hat\setX}
\newcommand{\J}{J}
\newcommand{\Jh}{\hat{J}}
\newcommand{\INPUT}{\STATE \textbf{input:}\,\,}
\newcommand{\ASSIGN}{\leftarrow}
\newcommand{\OUTPUT}{\STATE \textbf{output:}\,\,}
\newcommand{\Dn}{\mathcal{D}_n} % data set
\newcommand{\model}{M} % model symbol
\newcommand{\partn}{P} % partition symbol
\newcommand{\cell}{\mathcal{C}} % cell symbol
\newcommand{\mcs}{s_*} % minimum cell size
\newcommand{\tfinal}{t_{\textrm{wait}}} % number of final iterations
\newcommand{\Cle}{\cell_{\textrm{le}}}
\newcommand{\Cgt}{\cell_{\textrm{gt}}}
\newcommand{\vale}{\va_{\textrm{le}}}
\newcommand{\ble}{b_{\textrm{le}}}
\newcommand{\vagt}{\va_{\textrm{gt}}}
\newcommand{\bgt}{b_{\textrm{gt}}}
\newcommand{\riskn}{R_n} % empirical risk
\newcommand{\ma}[2]{f_{#1}(#2)} % max-affine function with model
\newcommand{\Fbar}{\mkern 4mu\overline{\mkern-4mu F\mkern-0mu}\mkern-3mu}
\newcommand{\Ebar}{\mkern 4mu\overline{\mkern-4mu E\mkern-0mu}\mkern-1mu}
\newcommand{\setM}{\mathcal{M}}
\newcommand{\fold}{u} % cross-validation fold variable
\newcommand{\nfolds}{\fold_*} % number of cross-validation folds
\newcommand{\tmax}{t_{\max}}
\newcommand{\fes}{f_{\textrm{es}}}
\newcommand{\fed}{f_{\textrm{ed}}}
\newcommand{\feg}{f_{\textrm{eg}}}
\newcommand{\fsd}{f_{\textrm{sd}}}
\newcommand{\fgs}{f_{\textrm{gs}}}
\newcommand{\fsg}{f_{\textrm{sg}}}
\newcommand{\fest}{f_{\textrm{es},t}}
\newcommand{\fedt}{f_{\textrm{ed},t}}
\newcommand{\fegt}{f_{\textrm{eg},t}}
\newcommand{\fsdt}{f_{\textrm{sd},t}}
\newcommand{\fgst}{f_{\textrm{gs},t}}
\newcommand{\fsgt}{f_{\textrm{sg},t}}
\begin{document}

\title{Max-affine estimators for convex stochastic programming}

\author{G\'abor Bal\'azs and Csaba Szepesv\'ari \\
        Department of Computing Science,
        University of Alberta, \\
        Edmonton, Alberta, T6G 2E8, Canada \\
        \ \\
        Andr\'as Gy\"orgy \\
        Department of Electrical and Electronic Engineering, \\
        Imperial College London,
        London, SW7 2BT, United Kingdom}

\maketitle

\begin{abstract}
  In this paper, we consider two
  sequential decision making problems with a convexity structure,
  namely an energy storage optimization task
  and a multi-product assembly example.
  We formulate these problems in the stochastic programming framework and
  discuss an approximate dynamic programming technique for their solutions.
  As the cost-to-go functions are convex in these cases,
  we use max-affine estimates for their approximations.
  To train such a max-affine estimate,
  we provide a new convex regression algorithm,
  and evaluate it empirically for these planning scenarios.
\end{abstract}

\section{Introdution}

This paper considers multi-stage stochastic programming problems
\citep[e.g.,][]{ShapiroDentchevaRuszczynski2009,
                BirgeLouveaux2011}
with restricting the model so that the cost-to-go functions remain convex
(\cref{sec:stoprg}).
To motivate this framework,
we provide two realistic benchmark planning problems (\cref{sec:experi}),
namely solar energy production with storage,
and the operation of a beer brewery.

To address these problems,
we consider an approximate dynamic programming approach
\citep[e.g.,][]{Bertsekas2005,Powell2011}.
For this, we estimate the cost-to-go functions
by convex regression techniques using max-affine representations
(formed by the maximum of finitely many affine functions).
To train a max-affine estimator,
we propose a novel algorithm (AMAP, \cref{sec:amap}),
which combines ideas of
the convex adaptive partitioning method (CAP, \citealp{HannahDunson2013})
and the least squares partition algorithm (LSPA, \citealp{MagnaniBoyd2009}),
while learns the model size by cross-validation.

We discuss a full approximate dynamic programming approach (\cref{sec:fADP})
that estimates the cost-to-go functions globally. It uses a forward pass
to generate a data set for the cost-to-go estimation
by uniformly sampling the reachable decision space.
Then it performs a backward pass to recursively approximate the
cost-to-go functions for all stages.
The technical details of this algorithm is provided
for polyhedral decision sets
and convex piecewise-linear cost-to-go representations.

Finally, we evaluate max-affine estimators
in the contexts of our benchmark planning problems
through the full approximate dynamic programming algorithm
(\cref{sec:experi}).

\section{Convex stochastic programming}
\label{sec:stoprg}

Consider a $T$-stage stochastic programming (SP) problem
\citep[e.g.,][]{RuszczynskiShapiro2003,
                ShapiroDentchevaRuszczynski2009,
                BirgeLouveaux2011},
where the goal is to find a decision $\vx_1^*$ solving the following:
\begin{equation} \begin{split}
  \label{eq:cvx-sto-prg}
    \vx_1^* \in
    \argmin_{\vx_1 \in \setX_1(\vx_0,\vz_0)} &\J_1(\vx_1)
    \,, \quad
    \J_t(\vx_t) \defeq
    \E\Big[c_{t}(\vx_{t},\vZ_t)
           + \hspace{-1mm}
             \min_{\vx_{t+1} \in \setX_{t+1}(\vx_t,\vZ_t)}
             \hspace{-1mm}
             \J_{t+1}(\vx_{t+1})\Big]
    \,,
  \end{split} \end{equation}
  with $t = 1,\ldots,T$,
       some fixed initial values $\vx_0,\vz_0$,
       $\setX_{T+1}(\vx_T,\vZ_T) \defeq \{\bot\}$,
       $\J_{T+1}(\bot) \defeq 0$,
   and a sequence of independent random variables $\vZ_1,\ldots,\vZ_T$.

Notice that \eqref{eq:cvx-sto-prg} includes
discrete-time finite-horizon Markov decision process formulations
\citep[e.g.,][]{Puterman1994,Sutton1998,Bertsekas2005,
                Szepesvari2010,Powell2011}
after the state and action variables are combined
into a single decision variable $\vx_t$,
and the environment dynamics along with the action constraints are described
by the decision constraint functions $\setX_t$.

In this text, we consider only a subset of SP problems \eqref{eq:cvx-sto-prg}
when the cost functions $c_1,\ldots,c_T$ are convex in $\vx_t$,
and $\graph(\setX_t(\vx_t,\vZ_t))$ are convex sets
for all $t = 1,\ldots,T$ and all $\vZ_t$ realizations,
where the graph of a set-valued function $C : \setX \to 2^\setY$
is defined as $\graph(C) \defeq \big\{(\vx,\vy) \in \setX\times\setY :
                                      \vy \in C(\vx)\big\}$.
In this case, the cost-to-go functions $\J_t(\cdot)$
are convex for all $t = 1,\ldots,T$ (e.g., see \cref{thm:graph-inf}),
hence we call these SP problems convex.

One approach to deal with such SP problems \eqref{eq:cvx-sto-prg}
is to use approximate dynamic programming (ADP) methods
\citep[e.g.,][]{Bertsekas2005,Powell2011,
                BirgeLouveaux2011,HannahPowellDunson2014}
which construct nested approximations to the cost-to-go functions,
\begin{equation}
\label{eq:Japx}
  \Jh_t(\vx_t)
  \approx \E\left[c_t(\vx_t,\vZ_t) +
                  \min_{\vx_{t+1}\in\setX_{t+1}(\vx_t,\vZ_t)}
                  \Jh_{t+1}(\vx_{t+1})\right]
  \approx \J_t(\vx_t)
  \,,
\end{equation}
backwards for $t = T, T-1, \ldots, 1$.
Notice that for convex SP problems with convex \mbox{cost-to-go} functions $\J_t$,
the estimates $\Jh_t$ can be restricted to be convex without loss of generality,
and so the minimization tasks in \eqref{eq:Japx} can be solved efficiently.

\section{Adaptive max-affine partitioning algorithm}
\label{sec:amap}

To represent convex cost-to-go functions, we use max-affine maps
formed by the maximum of finitely many affine functions (hyperplanes).
To train the parameters of such a max-affine estimate,
we present a new convex regression algorithm
called \emph{Adaptive Max-Affine Partitioning} (AMAP),
which combines the partitioning technique of
the convex adaptive partitioning algorithm
(CAP, \citealp{HannahDunson2013}),
the least squared partition algorithm
(LSPA, \citealp{MagnaniBoyd2009}),
and learns the model size (number of hyperplanes)
by a cross-validation scheme.

Just as LSPA and CAP, AMAP also aims to reduce the empirical risk
with respect to the squared loss defined as
$\riskn(f) \defeq \frac1n\sum_{i=1}^n|f(\vx_i)-y_i|^2$
for some function $f$ and data set
$\Dn \defeq \{(\vx_i,y_i):i=1,\ldots,n\} \subseteq (\setR^d \times \setR)^n$
with $n$ samples and dimension $d$.
For the discussion of AMAP, denote the max-affine function
of model $\model = \{(\va_k,b_k):k=1,\ldots,K\}$ by
$\ma{\model}{\vx} \defeq \max_{k=1,\ldots,K}\va_k^\T\vx+b_k$
for $\vx \in \setR^d$.
Notice that each max-affine model $\model$ induces a partition
over the data set $\Dn$ as
\begin{equation} \begin{split}
  \label{eq:MindP}
  \partn &\defeq \{\cell_1,\ldots,\cell_K\}
  \,, \quad
  \cell_k  \defeq \big\{ i \in \{1,\ldots,n\} \,\big|\,
                         \vx_i^\T\va_k+b_k = \ma{\model}{\vx_i}\big\}
  \,,
\end{split} \end{equation}
for some $K \in \setN$ and all $k = 1,\ldots,K$,
where ties are broken arbitrarily so that
$\cell_k \ne \emptyset$
and $\cell_k \cap \cell_l \ne \emptyset \iff k = l$ for all $k,l = 1,\ldots,K$.
Furthermore, each partition $\partn$ induces a max-affine model
by fitting each cell of $\partn$
using the linear least squares algorithm:
\begin{equation} \begin{split}
  \label{eq:cwls}
  \model &\defeq \{(\va_k,b_k) : k = 1,\ldots,K\}
  \,, \quad
  \Delta_{ik} \defeq \vx_i - \frac{1}{|\cell_k|}\sum_{i\in\cell_k}\vx_i
  \,, \\
  \va_k &\defeq \Big(\sum_{i\in\cell_k}
                     \Delta_{ik}\Delta_{ik}^\T + \beta\Id_d\Big)^{-1}
                \sum_{i\in\cell_k}\Delta_{ik} \, y_i
  \,, \quad
  b_k \defeq \frac{1}{|\cell_k|}\sum_{i\in\cell_k}y_i-\va_k^\T\vx_i
  \,,
\end{split} \end{equation}
where $\Id_d$ is the $d \times d$ identity matrix,
and $\beta > 0$ is set to some small value for stability
(we use $\beta \defeq 10^{-6}$).

Similar to CAP, AMAP builds the model incrementally by cell splitting,
and improves the partition using LSPA by alternating steps
\eqref{eq:MindP} and \eqref{eq:cwls}.
The AMAP model improvement step is given by \cref{alg:AMAPstep}.
\begin{algorithm}[h!]
\centering
\begin{minipage}{0.9\columnwidth}
\begin{algorithmic}[1]
  \INPUT training set $\Dn$,
         model $\model = \{(\va_k,b_k) : k = 1,\ldots,K\}$,
  \item[] \hspace{12.5mm} partition $\partn = \{\cell_1,\ldots,\cell_K\}$,
          empirical risk $E$, minimum cell size $\mcs$
  \item[] \COMMENT{cell splitting}
  \STATE \label{eq:AMAPstep:csplit1}
         $\model_*' \ASSIGN \model, E_*' \ASSIGN E$,
         $\partn_*' \ASSIGN \partn$
  \FORALL{$k = 1,\ldots,K$}
  \IF{$|\cell_k| \ge 2\mcs$}
      \FORALL{$j = 1,\ldots,d$} \label{eq:AMAPstep:split1}
        \STATE \label{eq:AMAPstep:median1}
               $m_j \ASSIGN \median(\{\X_{ij} : i\in\cell_k\})$
        \STATE \label{eq:AMAPstep:median2}
               $\Cle \ASSIGN \{i\in\cell_k : \X_{ij} \le m_j\}$,
               $\Cgt \ASSIGN \{i\in\cell_k : \X_{ij} \ge m_j\}$
        \STATE \label{eq:AMAPstep:ridge1}
               $(\vale,\ble) \ASSIGN $ least squares
               on $\{(\vX_i,\Y_i) : i\in \Cle\}$
        \STATE \label{eq:AMAPstep:ridge2}
               $(\vagt,\bgt) \ASSIGN $ least squares
               on $\{(\vX_i,\Y_i) : i\in \Cgt\}$
        \STATE \label{eq:AMAPstep:model}
               $\model' \ASSIGN \big(\model \setminus \{(\va_k,b_k)\}\big)
                                      \cup \{(\vale,\ble), (\vagt,\bgt)\}$
        \STATE \label{eq:AMAPstep:risk}
               $E' \ASSIGN \riskn\big(\ma{M'}{\cdot}\big)$
        \IF{$E' < E_*'$}
          \STATE $\model_*' \ASSIGN \model'$,
                 $E_*' \ASSIGN E'$,
                 $\partn_*' \ASSIGN (\partn \setminus \{\cell_k\})
                                    \cup \{\Cle, \Cgt\}$
        \ENDIF
      \ENDFOR \label{eq:AMAPstep:split2}
    \ENDIF
  \ENDFOR \label{eq:AMAPstep:csplit2}
  \item[] \COMMENT{running LSPA}
  \REPEAT \label{eq:AMAPstep:LSPArep}
    \STATE \label{eq:AMAPstep:LSPAstart}
           $\model_* \ASSIGN \model_*'$,
           $E_* \ASSIGN E_*'$,
           $\partn_* \ASSIGN \partn_*'$
    \STATE $\partn_*' \ASSIGN $
           induced partition of $\model_*$ by \eqref{eq:MindP}
    \STATE \label{eq:AMAPstep:LSPAfit}
           $\model_*' \ASSIGN $ fitting of partition $\partn_*'$
           by \eqref{eq:cwls}
    \STATE \label{eq:AMAPstep:LSPAerror}
           $E_*' \ASSIGN \riskn\big(\ma{\model'_*}{\cdot}\big)$
  \UNTIL{$\min_{\cell\in\partn_*'}|\cell| \ge \mcs$
         and $E_*' < E_*$} \label{eq:AMAPstep:LSPAuntil}
  \OUTPUT model $\model_*$, partition $\partn_*$, empirical risk $E_*$
\end{algorithmic}
\end{minipage}
\caption{AMAP model improvement step.}
\label{alg:AMAPstep}
\end{algorithm}
Notice that AMAP performs coordinate-wise cell splitting
(steps~\ref{eq:AMAPstep:split1} to~\ref{eq:AMAPstep:split2}),
just as CAP, however, AMAP makes
the split always at the median (steps~\ref{eq:AMAPstep:median1} and
                                      \ref{eq:AMAPstep:median2})
instead of checking multiple cut points.
This saves computation time, but can also create worse splits.
To compensate for this loss in quality,
AMAP runs a restricted version of LSPA
(steps~\ref{eq:AMAPstep:LSPArep} to \ref{eq:AMAPstep:LSPAuntil})
not just for a single step as CAP,
but until the candidate model improves the empirical risk
and its induced partition satisfies
the minimum cell requirement (step~\ref{eq:AMAPstep:LSPAuntil}).
We also mention that indices $\{i\in\cell_k : \X_{ij} = m_j\}$
are assigned to $\Cle$ and $\Cgt$ (step~\ref{eq:AMAPstep:median2})
in order to preserve the minimum cell requirement.

Notice that the difference between $\model'$ and $\model$
is only two hyperplanes (step~\ref{eq:AMAPstep:model}),
so the number of arithmetic operations
for computing $E'$ (step~\ref{eq:AMAPstep:risk})
can be improved from $\Ordo(nKd)$ to $\Ordo(nd)$.
Further, the cost of least squares regressions
(steps \ref{eq:AMAPstep:ridge1} and \ref{eq:AMAPstep:ridge2})
is $\Ordo(|\cell_k|d^2)$.
Hence, the computational cost of the entire cell splitting process
(steps \ref{eq:AMAPstep:csplit1} to \ref{eq:AMAPstep:csplit2})
is bounded by $\Ordo(\max\{K,d\} d^2 n)$.
For the LSPA part, the partition fitting (step~\ref{eq:AMAPstep:LSPAfit})
is $\Ordo(n d^2)$ and the error calculation (step~\ref{eq:AMAPstep:LSPAerror})
is $\Ordo(nKd)$. So, the cost of a single LSPA iteration
(steps~\ref{eq:AMAPstep:LSPAstart} to~\ref{eq:AMAPstep:LSPAerror})
is bounded by $\Ordo(\max\{K,d\}d n)$, implying that the cost of
\cref{alg:AMAPstep} is bounded by
$\Ordo\big(\max\{t_{\textrm{LSPA}},d\}\max\{K,d\}dn\big)$,
where $t_{\textrm{LSPA}}$ denotes the number of LSPA iterations.

Undesirably, coordinate-wise cell splitting
does not provide rotational invariance.
To fix this, we run AMAP after a pre-processing step, which
uses thin singular value decomposition (thin-SVD)
to drop redundant coordinates
and align the data along a rotation invariant basis.
Formally, let the raw (but already centered) data be organized into
$X \in \setR^{n \times d}$ and $\vy \in \setR^n$.
Then, we scale the values
$[y_1 \ldots y_n]^\T \defeq \vy/\max\{1,\inorm{\vy}\}$,
and decompose $X$ by thin-SVD as $X = U S V^\T$, where
$U \in \setR^{n \times d}$ is semi-orthogonal,
$S \in \setR^{d \times d}$ is diagonal
with singular values in decreasing order,
and $V \in \setR^{d \times d}$ is orthogonal.
Coordinates related to zero singular values are dropped%
\footnote{By removing columns of $U$ and $V$, and columns and rows of $S$.}
and the points are scaled by $S$
as $[\vx_1\ldots\vx_n]^\T \defeq US/\max\{1,S_{11}\}$,
where $S_{11}$ is the largest singular value.
Now observe that rotating the raw points $X$ as $XQ$
with some orthogonal matrix $Q \in \setR^{d \times d}$
only transforms $V$ to $Q^\T V$
and does not affect the pre-processed points $\vx_1,\ldots,\vx_n$.
Finally, we note that thin-SVD can be computed using $\Ordo(n d^2)$
arithmetic operations (with $n \ge d)$,%
\footnote{First decompose $X$ by a thin-QR algorithm
          in $\Ordo(nd^2)$ time \citep[Section~5.2.8]{GolubLoan1996}
          as $X = QR$, where $Q \in \setR^{n \times d}$ has orthogonal columns
          and $R \in \setR^{d \times d}$ is upper triangular.
          Then apply SVD for $R$ in $\Ordo(d^3)$ time
          \citep[Section~5.4.5]{GolubLoan1996}.}
which is even less than the asymptotic cost of \cref{alg:AMAPstep}.

AMAP is presented as \cref{alg:AMAP}
and run using uniformly shuffled (and pre-processed) data $\Dn$,
and a partition $\{F_1,\ldots,F_{\nfolds}\}$ of $\{1,\ldots,n\}$
with equally sized cells (the last one might be smaller)
defining the cross-validation folds.
\begin{algorithm}[h]
\centering
\begin{minipage}{0.98\columnwidth}
\begin{algorithmic}[1]
  \INPUT training set $\Dn$, minimum cell size $\mcs$,
         folds $F_1,\ldots,F_{\nfolds}$, iterations $\tfinal$
  \item[] \COMMENT{initialization}
  \FOR{$\fold = 1,\ldots,\nfolds$} \label{eq:AMAP:init1}
    \STATE $\partn_{\fold} \ASSIGN \{\Fbar_{\fold}\}$
           with $\Fbar_{\fold} \defeq \{1,\ldots,n\} \setminus F_{\fold}$
    \STATE $\model_{\fold} = \{(\va_1^{\fold},b_1^{\fold})\} \ASSIGN $
           least squares on $\{(\vx_i,y_i) : i \in \Fbar_{\fold}\}$
    \STATE $\Ebar_{\fold} \ASSIGN |\Fbar_{\fold}|^{-1}
                                  \sum_{i\in\Fbar_{\fold}}
                                  |\ma{\model_{\fold}}{\vx_i}-y_i|^2$,
           $E_{\fold} \ASSIGN |F_{\fold}|^{-1}
                              \sum_{i\in F_{\fold}}
                              |\ma{\model_{\fold}}{\vx_i}-y_i|^2$
  \ENDFOR \label{eq:AMAP:init2}
  \STATE \label{eq:AMAP:best1}
         $\setM_* \ASSIGN \{\model_1,\ldots,\model_{\nfolds}\}$,
         $E_* \ASSIGN \frac{1}{\nfolds}\sum_{\fold=1}^{\nfolds}E_{\fold}$
  \item[] \COMMENT{cross-validation training}
  \STATE \label{eq:AMAP:cvstart}
         $t \ASSIGN 1$, $\tmax \ASSIGN \tfinal$
  \WHILE{$t \le \min\{\tmax,\lceil n^{d/(d+4)} \rceil\}$}
    \FOR{$\fold = 1,\ldots,\nfolds$}
      \STATE \label{eq:AMAP:update}
             $(\model_{\fold},\partn_{\fold},\Ebar_{\fold}) \ASSIGN $
             update by \cref{alg:AMAPstep}
             using $\{(\vx_i,y_i) : i \in \Fbar_{\fold}\}$,
                   $\model_{\fold}$, $\partn_{\fold}$,
                   $\Ebar_{\fold}$, $\mcs$
      \STATE \label{eq:AMAP:testerr}
             $E_{\fold} \ASSIGN |F_{\fold}|^{-1}
                                \sum_{i\in F_{\fold}}
                                |\ma{\model_{\fold}}{\vx_i}-y_i|^2$
    \ENDFOR
    \STATE \label{eq:AMAP:cverr}
           $E \ASSIGN \frac{1}{\nfolds}\sum_{\fold=1}^{\nfolds}E_{\fold}$
    \IF{$E < E_*$}
      \STATE \label{eq:AMAP:best2}
             $\setM_* \ASSIGN \{\model_1,\ldots,\model_{\nfolds}\}$,
             $E_* \ASSIGN E$,
             $\tmax \ASSIGN t + \tfinal$
    \ENDIF
    \STATE $t \ASSIGN t+1$
  \ENDWHILE \label{eq:AMAP:cvend}
  \item[] \COMMENT{choosing the final model}
  \STATE \label{eq:AMAP:fmodel}
         $\model_* \ASSIGN \argmin_{\model \in \setM_*}
                           \riskn\big(\ma{\model}{\cdot}\big)$
  \OUTPUT model $\model_*$
\end{algorithmic}
\end{minipage}
\caption{Adaptive max-affine partitioning (AMAP).}
\label{alg:AMAP}
\end{algorithm}

For model selection, AMAP uses a $\nfolds$-fold cross-validation scheme
(steps \ref{eq:AMAP:cvstart} to \ref{eq:AMAP:cvend})
to find an appropriate model size
that terminates when the cross-validation error (step~\ref{eq:AMAP:cverr})
of the best model set $\setM_*$
(steps~\ref{eq:AMAP:best1} and \ref{eq:AMAP:best2})
cannot be further improved for $\tfinal$ iterations.
At the end, the final model is chosen
from the model set $\setM_*$ having the best cross-validation error,
and selected so to minimize the empirical risk on the entire data
(step~\ref{eq:AMAP:fmodel}).
For this scheme, we use the parameters
$\nfolds \defeq 10$ and $\tfinal \defeq 5$.

AMAP starts with models having a single hyperplane
(steps~\ref{eq:AMAP:init1} to \ref{eq:AMAP:init2})
and increments each model by at most one hyperplane in every iteration
(step~\ref{eq:AMAP:update}).
Notice that if AMAP cannot find a split for a model $\model_{\fold}$
to improve the empirical risk~$\Ebar_{\fold}$,
the update for model $\model_{\fold}$
(steps~\ref{eq:AMAP:update} and \ref{eq:AMAP:testerr})
can be skipped in the subsequent iterations
as \cref{alg:AMAPstep} is deterministic.
We also mention that for the minimum cell size,
we use $\mcs \defeq \max\{2(d+1),\lceil \log_2(n)\rceil\}$
allowing model sizes up to $\Ordo\big(n^{d/(d+4)}/\ln(n)\big)$,
which is enough for near-optimal worst-case performance
\citep[Theorems~4.1 and~4.2]{BalazsGyorgySzepesvari2015}.

\section{Approximate dynamic programming}
\label{sec:fADP}

Here we use max-affine estimators to approximate the cost-to-go
functions of convex SP problems.
First, notice that solving \eqref{eq:cvx-sto-prg} is equivalent to
the computation of $\pol_1(\vx_0,\vz_0)$, where
\begin{equation} \begin{split}
  \pol_t(\vx_{t-1},\vz_{t-1})
    &\defeq \argmin_{\vx_t\in\setX_t(\vx_{t-1},\vz_{t-1})} \J_t(\vx_t)
  \,, \\
  \J_t(\vx_t) &= \E\big[c_t(\vx_t,\vZ_t)
               + \J_{t+1}(\pol_{t+1}(\vx_t,\vZ_t))\big]
  \,,
\end{split} \end{equation}
for all $t = 1,\ldots,T$,
and $\pol_{T+1}(\cdot,\cdot) \defeq \bot$ with $\J_{T+1}(\bot) = 0$.
The sequence $\pol \defeq (\pol_1,\ldots,\pol_T)$ represents an
optimal policy.

We only consider SP problems with convex polyhedral decision constraints
written as
$\setX_{t+1}(\vx_t,\vZ_t)
 = \{\vx_{t+1} : Q_{t+1}\vx_{t+1} + W_{t+1}(\vZ_t)\vx_t \le \vc_{t+1}(\vZ_t)\}$
which are non-empty for all possible realizations of $\vx_t$ and $\vZ_t$.
As the coefficient~$Q_{t+1}$ of the decision variable $\vx_{t+1}$ is independent
of random disturbances $\vZ_t$ and the constraint
$\vx_t \in \setX_t(\vx_{t-1},\vz_{t-1})$ for policy $\pol_t$ is feasible
for any $\vx_{t-1}$ and $\vz_{t-1}$,
these SP problems are said to have a fixed, relatively complete recourse
\citep[Section~2.1.3]{ShapiroDentchevaRuszczynski2009}.
We will exploit the fixed recourse property for sampling
\eqref{eq:sto-prog:spl}, while relatively complete recourse
allows us not to deal with infeasibility issues
which could make these problems very difficult to solve.%
\footnote{Infeasible constraints can be equivalently modeled by
          cost-to-go functions assigning infinite value
          for points outside of the feasible region.
          Then notice that the estimation of functions
          with infinite magnitude and slope can be arbitrarily hard
          even for convex functions
          \citep[Section~4.3]{BalazsGyorgySzepesvari2015}.}

In order to construct an approximation $\Jh_t$ to the cost-to-go function $\J_t$,
we need ``realizable'' decision samples $\vx_{t,i}$ at stage $t$.
We generate these incrementally during a forward pass for $t = 1,2,\ldots,T$,
where given $n$ decisions $\vx_{t,1},\ldots,\vx_{t,n}$
and $m$ disturbances $\vz_{t,1},\ldots,\vz_{t,m}$ at stage $t$,
we uniformly sample new decisions for stage $t+1$ from the set
\begin{equation} \begin{split}
\label{eq:sto-prog:spl}
  \setXh_{t+1} &\defeq
  \cvxhull\bigg(\bigcup_{i=1,\ldots,n, \atop j = 1,\ldots,m}
                \setX_{t+1}(\vx_{t,i},\vz_{t,j})\bigg)
                \\
               &=
  \Big\{\vx_{t+1} : Q_{t+1}\vx_{t+1} \le
                    \max_{i=1,\ldots,n,\atop j=1,\ldots,m}
                    \big\{\vc_{t+1}(\vz_{t,j})
                          - W_{t+1}(\vz_{t,j})\vx_{t,i}\big\}
  \Big\}
  \,,
\end{split} \end{equation}
where $\cvxhull(\cdot)$ refers to the convex hull of a set,
and the maximum is taken component-wise.
To uniformly sample the convex polytope $\setXh_{t+1}$,
we use the Hit-and-run Markov-Chain Monte-Carlo algorithm
(\citealp{Smith1984}, or see \citealp{Vempala2005})
by generating 100 chains (to reduce sample correlation)
each started from the average of 10 randomly generated border points,
and discarding $d_{t+1}^2$ samples on each chain during the burn-in phase,%
\footnote{The choice was inspired by the $\Ordo(d_{t+1}^2)$ mixing result
          of the Hit-and-run algorithm \citep{Lovasz1999}.}
where $d_{t+1}$ is the dimension of $\vx_{t+1}$.

Then, during a backward pass for $t = T,T-1,\ldots,1$,
we recursively use the cost-to-go estimate of the previous stage
$\Jh_{t+1}(\cdot)$
to approximate the values of the cost-to-go function~$\J_t$
at the decision samples $\vx_{t,i}$ generated during the forward pass,
that is
\begin{equation}
\label{eq:sto-prog:yt}
  \J_t(\vx_{t,i}) \approx y_{t,i}
  \defeq \frac{1}{m}\sum_{j=1}^m
         c_t(\vx_{t,i},\vz_{t,j})
       + \min_{\vx_{t+1} \in \setX_{t+1}(\vx_{t,i},\vz_{t,j})}
         \Jh_{t+1}(\vx_{t+1})
\end{equation}
for all $t = 1,\ldots,T$, and $\Jh_{T+1}(\cdot) \equiv 0$.
This allows us to set up a regression problem with data
$\{(\vx_{t,i},y_{t,i}) : i=1,\ldots,n\}$, and to construct
an estimate $\Jh_t(\cdot)$ of the cost-to-go $\J_t(\cdot)$.

We call the resulting method, shown as \cref{alg:fADP},
full approximate dynamic programming (fADP)
because it constructs global approximations to the cost-to-go functions.%
\begin{algorithm}[h]
\centering
\begin{minipage}{0.96\columnwidth}
\begin{algorithmic}[1]
  \INPUT SP problem, trajectory count $n$,
         evaluation count $m$, estimator REG
  \STATE $\vx_{0,i} \ASSIGN \vx_0$ for all $i = 1,\ldots,n$
  \STATE $\vz_{0,j} \ASSIGN \vz_0$ for all $j = 1,\ldots,m$
  \item[] \COMMENT{forward pass}
  \FORALL{$t = 0,1,\ldots,T-1$}
    \STATE sample $\vx_{t+1,1},\ldots,\vx_{t+1,n}$ from $\setXh_{t+1}$
           by \eqref{eq:sto-prog:spl}
           using $\vx_{t,1},\ldots,\vx_{t,n}$,
             and $\vz_{t,1},\ldots,\vz_{t,m}$
    \STATE sample $\vz_{t+1,1},\ldots,\vz_{t+1,m}$
           from the distribution of $\vZ_{t+1}$
  \ENDFOR
  \item[] \COMMENT{backward pass}
  \STATE $\Jh_{T+1}(\cdot) \ASSIGN 0$
  \FORALL{$t = T,T-1,\ldots,1$}
    \STATE compute $y_{t,1},\ldots,y_{t,n}$ by \eqref{eq:sto-prog:yt}
           using $\Jh_{t+1}(\cdot)$, $\vx_{t,1},\ldots,\vx_{t,n}$,
                                 and $\vz_{t,1},\ldots,\vz_{t,m}$
    \STATE $\Jh_t(\cdot) \ASSIGN
            \textrm{REG}(\{(\vx_{t,i},y_{t,i}) : i=1,\ldots,n\})$
  \ENDFOR
  \OUTPUT cost-to-go functions $\Jh_t(\cdot)$, $t = 1,\ldots,T$
\end{algorithmic}
\end{minipage}
\caption{Full approximate dynamic programming (fADP).}
\label{alg:fADP}
\end{algorithm}

When the cost-to-go functions $\J_t$ are approximated
by ``reasonably sized''
convex piecewise linear representations $\Jh_t$,
the minimization problem in \eqref{eq:sto-prog:yt}
can be solved efficiently by linear programming (LP).
In the following sections, we exploit the speed of LP solvers
for fADP using either AMAP or CAP as the regression procedure.
Then, the computational cost to run \cref{alg:fADP} is mostly realized
by solving $Tnm$ LP tasks for \eqref{eq:sto-prog:yt}
and training $T$ estimators using the regression algorithm REG.

Here we mention that max-affine estimators
using as many hyperplanes as the sample size $n$
can be computed by convex optimization techniques
efficiently up to a few thousand samples \citep{MazumderEtAl2015}.
These estimators provide worst-case generalization error guarantees,
however, the provided max-affine models are large enough to
turn the LP tasks together too costly to solve
(at least using our hardware and implementation).
\todob{Future work for tuning LP methods for solving large number
       of similar tasks. There is some literature on this
       (as mentioned at the end of the section),
       see \citet{GassmannWallace1996}.
       But commercial solvers do not provide much support.}

The situation is even worse for nonconvex estimators REG
for which LP has to be replaced for \eqref{eq:sto-prog:yt}
by a much slower nonlinear constrained optimization method
using perhaps randomized restarts to minimize the chance of being trapped
in a local minima.
Furthermore, when the gradient information with respect to the input
is not available (as for many implementations),
the minimization over these representations require
an even slower gradient-free nonlinear optimization technique.
Hence, multivariate adaptive regression splines,
support vector regression, and neural networks
were impractical to use for fADP on our test problems.

\section{Experiments}
\label{sec:experi}

To test fADP using max-affine estimators,
we consider two SP planning problems, namely
solar energy production with storage management (\cref{sec:prb:energy-opt}),
and the operation of a beer brewery (\cref{sec:prb:beer-brew}).

For our numerical experiments, the hardware has
a Dual-Core AMD Opteron(tm) Processor 250
(2.4GHz, 1KB L1 Cache, 1MB L2 Cache) with 8GB RAM.
The software uses MATLAB (R2010b),
and the MOSEK Optimization Toolbox (v7.139).

To measure the performance of the fADP algorithm,
we evaluate the greedy policy with respect to the learned
cost-to-go functions $\{\Jh_t : t = 1,\ldots,T\}$.
More precisely, we run $\hpol \defeq (\hpol_1,\ldots,\hpol_T)$ with
$\hpol_t(\vx_{t-1},\vz_{t-1})
 \in \argmin_{\vx_t\in\setX_t(\vx_{t-1},\vz_{t-1})}\Jh_t(\vx_t)$
on $1000$ episodes, and record the average revenue (negative cost) as
$\textrm{REV} \defeq -\frac{1}{1000}\sum_{e=1}^{1000}
                      \sum_{t=1}^T c_t\big(\vx^{(e)}_t,\vz^{(e)}_t\big)$
over the episodes' trajectories
$\big\{\big(\vx^{(e)}_t,\vz^{(e)}_t\big) : t = 1,\ldots,T\big\}$,
$e = 1,\ldots,1000$.
We repeat this experiment $100$ times
for each regression algorithm REG,%
\footnote{The random seeds are kept synchronized,
          so every algorithm is evaluated
          on the same set of trajectories.
          Furthermore, fADP algorithms with the same $n$ and $m$ parameters
          use the same training data $\vx_{t,1},\ldots,\vx_{t,n}$
          and $\vz_{t,1},\ldots,\vz_{t,m}$ for all $t = 0,\ldots,T$.}
and show the mean and standard deviation of the resulting sample.

\subsection{Energy storage optimization}
\label{sec:prb:energy-opt}

Inspired by a similar example of \citet[Section~7.3]{JiangPowell2015},
we consider an energy storage optimization problem where
a renewable energy company makes a decision every hour
and plans for two days ($T = 48$).
The company owns an energy storage with state~$s$
which can be charged with maximum rate $r_c$, using
the company's renewable energy source~($E$)
or the electrical grid that the company can buy electricity from
while paying the retail price~($p$).
The goal is to maximize profit by selling electricity to local clients
on retail price~($p$) according to their stochastic demand~($D$)
or selling it back to the electrical grid on wholesale price~($w$).
Electricity can be sold directly from the renewable energy source
or from the battery with maximum discharge rate $r_d$.
The energy flow control variables, $\fes, \fed, \feg, \fsd, \fsg, \fgs$,
are depicted on \cref{fig:energy-storage}.
\begin{figure}[h!] \begin{center}
  \includegraphics[width=9.5cm,height=4cm]{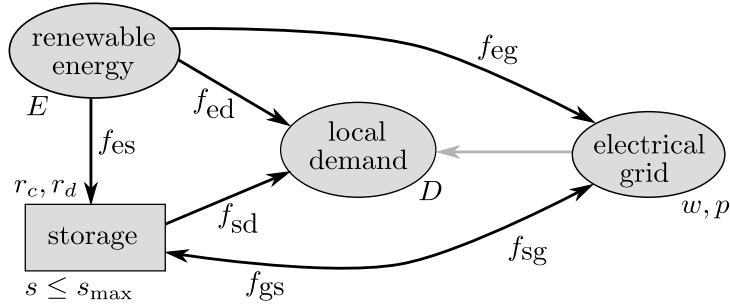}
  \caption{Flow diagram of a convex energy storage problem.}
  \label{fig:energy-storage}
\end{center} \end{figure}

The SP model \eqref{eq:cvx-sto-prg} of the energy storage problem
can be formulated by using the decision variable
\mbox{$\vx_t \defeq [s_t\,\,\fest\,\,\fedt\,\,\fegt
                        \,\,\fsdt\,\,\fsgt\,\,\fgst]^\T
       \in [0,\infty)^7$}
and setting $\vZ_t \defeq [E_{t+1}\,\,D_{t+1}]^\T$.
Furthermore, the cost function is defined as
\begin{equation*}
  c_t(\vx_t,\vZ_t) \defeq p_t(\fgst-\fedt-\fsdt) -w_t(\fegt+\fsgt)
  \,,
\end{equation*}
for all $t = 1,\ldots,T$,
and the dynamics and control constraints are described by
\begin{equation*}
  \setX_{t+1}(\vx_t,\vZ_t) \defeq
    \left\{\left[\begin{array}{c}
                 s \\ \fes \\ \fed \\ \feg \\ \fsd \\ \fsg \\ \fgs
                 \end{array}\right]
    \,\middle|\,
    \begin{array}{c}
      s = s_t + \fest - \fsdt - \fsgt + \fgst ,
      \\
      \fes, \fed, \feg, \fsd, \fsg, \fgs \ge 0 ,\,
      \\
      0 \le s + \fes - \fsd -\fsg + \fgs \le s_{\max} ,
      \\
      \fes + \fgs \le r_c ,\, \fsd + \fsg \le r_d ,
      \\
      \fes + \fed + \feg \le E_{t+1} ,\,
      \fed + \fsd \le D_{t+1}
    \end{array}
    \right\}
  \,,
\end{equation*}
for all $t = 0,\ldots,T-1$.
To initialize the system, define $\vx_0 \defeq [s_0\,\,0\ldots0]^\T$
and $\vz_0 \defeq [d_1\,\,e_1]^\T$,
where $s_1 = s_0 \in [0,s_{\max}]$ is the current storage level
and \mbox{$d_1,e_1 \ge 0$}
are the currently observed demand and energy production, respectively.

To set the parameters of this energy storage optimization problem,
we consider a solar energy source, a discounted nightly electricity pricing model
(Economy~7 tariff), and planning for a two days horizon on hourly basis
($T \defeq 48$).
Retail and expected wholesale price curves, along with
the electricity demand and energy production distributions
of this model are shown on \cref{fig:esp} for
the two consecutive sunny days.
\begin{figure}[h!] \begin{center}
  \input{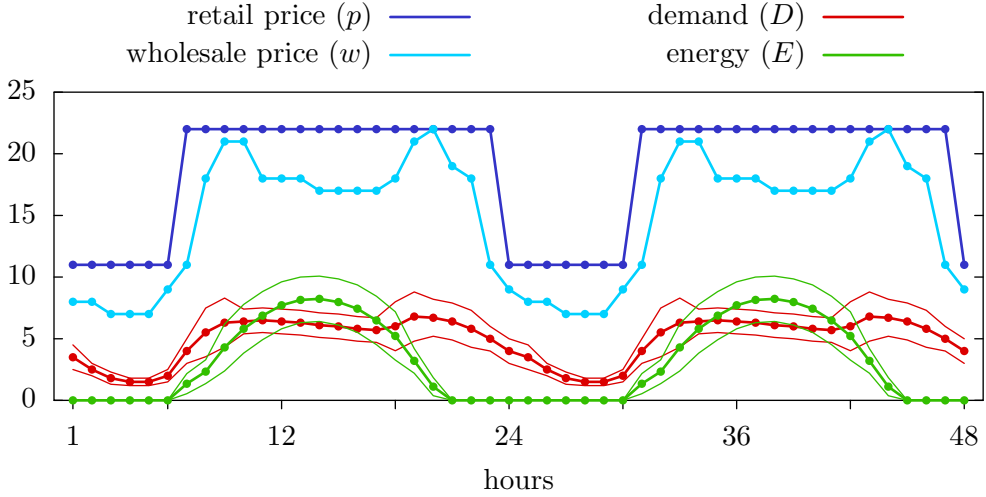}
  \vspace{2mm}
  \caption{Parameters of the energy storage optimization problem.
           Retail~($p$) and expected wholesale~($w$) price curves,
           energy demand~($D$) and production~($E$) distributions
           with mean and standard deviation
           are shown for two-day long period.}
  \label{fig:esp}
  \vspace{-1mm}
\end{center} \end{figure}
The distributions are truncated normal with support
$D \in [0,15]$ and $E \in [0,12]$
for demand and energy production, respectively
and the storage has capacity $s_{\max} \defeq 20$
with charge and discharge rates $r_c \defeq 4$ and $r_d \defeq 10$,
respectively.
The model is initialized by $s_0 \defeq 0$,
$d_1 \defeq \E[D_1]$, and $e_1 \defeq \E[E_1]$.

To evaluate fADP on this problem,
we use the CAP\footnote{As the implementation of CAP is not too reliable for
highly-correlated features, we combined it with the data preprocessing step
of AMAP (see \cref{sec:amap}) which improved its stability.}
and AMAP convex regression algorithms,
and multiple configurations determined by the number of trajectories $n$
generated for training (which is the sample size for the regression
tasks as well), and the number of evaluations $m$ used to approximate
the cost-to-go functions $\J_t$ at a single point \eqref{eq:sto-prog:yt}.
The result is presented on \cref{fig:result-plan-esp},
\begin{figure}[h!] \begin{center}
  \input{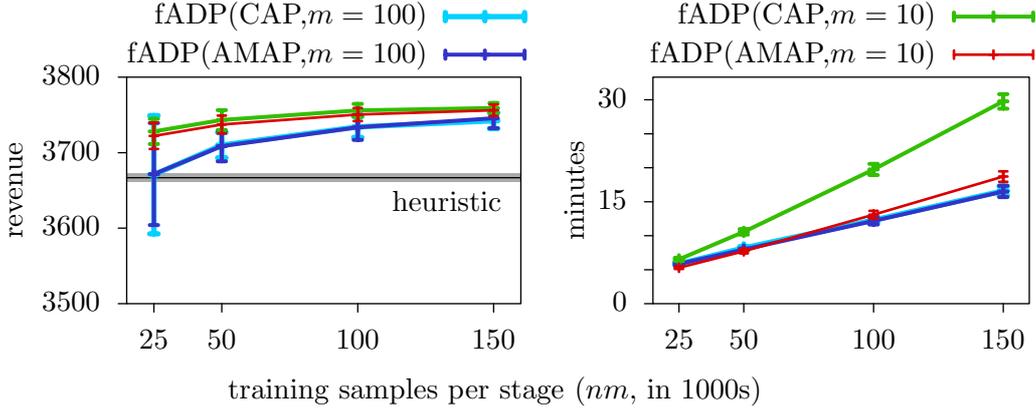}
  \caption{Energy storage optimization results for the fADP algorithm
           using AMAP or CAP as the inner convex regression procedure.
           Results show the total revenue and the training time in minutes
           with mean and standard deviation
           for the number of trajectories~$n$ and cost-to-go evaluations~$m$.}
  \label{fig:result-plan-esp}
  \vspace{-5mm}
\end{center} \end{figure}
which also includes a ``heuristic'' algorithm to provide a baseline.
The heuristic uses a fixed policy of immediately selling the solar energy
preferrably for demand ($\fed \to \max$, $\feg \ge 0$, $\fes = 0$),
selling from the battery during the day when demand still allows
($\fgs = 0$, $\fsd \ge 0$),
charging the battery overnight ($\fgs \to \max$, $\fsd = 0$),
and selling everything close to the last stage
($\fgs = 0$, $\fsd \to \max$, $\fsg \to \max$).
This policy is much better than the \emph{optimal policy without storage}%
\footnote{Because $p \ge w$,
          the optimal policy for $s_{\max} = 0$
          minimizes the immediate cost by
          $\fed \defeq \min\{E,D\}$,
          $\feg \defeq \max\{0,E-\fed\}$,
          and $\fes \defeq \fsd \defeq \fgs \defeq \fsg \defeq 0$.}
which scores $3227 \pm 6$.

The results of \cref{fig:result-plan-esp} show that fADP
using either CAP or AMAP significantly outperforms
the heuristic baseline algorithm when the sample size is large enough.
The regression algorithms prefer larger sample sizes $n$
to better sample quality $m$,
although this significantly increases the computation time for CAP
and provides only an insignificant revenue increase compared to AMAP.

\subsection{Beer brewery operation}
\label{sec:prb:beer-brew}

Inspired by \citet[Chapter~17]{AIMMS2016},
we consider the multi-product assembly problem of operating a beer brewery
which makes a decision in every two weeks and plans for about one year
($48$ weeks, $T = 24$).
The factory has to order ingredients (stratch source, yeast, hops)
to produce two types of beers (ale and lager) which have to be fermented
(for at least $2$ weeks for ale and $6$ weeks for lager) before selling.
The states and actions of this process
are illustrated on \cref{fig:beer-brewery}.
\begin{figure}[h!] \begin{center}
  \includegraphics[width=12cm,height=5cm]{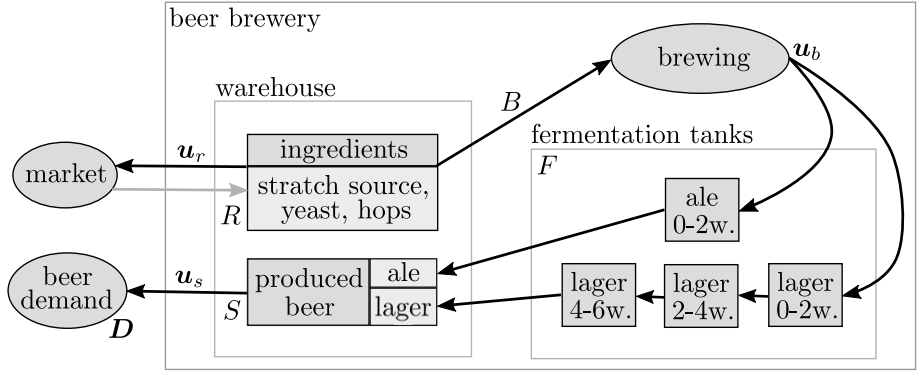}
  \caption{Diagram of the beer brewery problem.
           The state includes three types of ingredients and two types of
           bottled beer in the warehouse, and the four fermentation tanks.
           Actions are formed by ingredient orders $\vu_r$,
           brewing amounts~$\vu_b$, and beer sales $\vu_s$.}
  \label{fig:beer-brewery}
\end{center} \end{figure}

The decision variable $\vx_t$ is a $16$ dimensional vector with the
following components:
\begin{equation*}
  \vx_t \defeq \left[ \begin{array}{c}
               \textrm{stratch source in storage} \\
               \textrm{yeast in storage} \\
               \textrm{hops in storage} \\
               \textrm{ale beer fermented for less than 2 weeks} \\
               \textrm{produced ale beer} \\
               \textrm{lager beer fermented for less than 2 weeks} \\
               \textrm{lager beer fermented for 2 to 4 weeks} \\
               \textrm{lager beer fermented for 4 to 6 weeks} \\
               \textrm{produced lager beer} \\[1mm]
               \hline \\[-4mm]
               \textrm{stratch source order} \\
               \textrm{yeast order} \\
               \textrm{hops order} \\
               \textrm{ale beer brewing} \\
               \textrm{lager beer brewing} \\
               \textrm{ale beer sales} \\
               \textrm{lager beer sales} \\
               \end{array} \right]
  \in [0,\infty)^{16}
  \,.
\end{equation*}
The first $9$ coordinates are state variables,
while the last $7$ coordinates represent actions.
The cost functions (which may take negative values) are
$c_t(\vx_t,\vZ_t) \defeq [\vh_t^\T\,\vc_t^\T\,-\vr_t^\T]\vx_t$,
where $\vh_t \in [0,\infty)^{9}$ is the storage cost,
      $\vc_t \in [0,\infty)^{5}$ is the market price of the ingredients
with the brewing costs (adjusted by the water price),
and $\vr_t \in [0,\infty)^{2}$ is the selling price of the beers,
for stages $t = 1,\ldots,T$.
The constraint on the dynamics of the system is given by
\begin{equation*}
  \setX_{t+1}(\vx_t,\vZ_t) \defeq
    \left\{\left[\hspace{-1mm}\begin{array}{c}
                 F\vx_t + R\vu_r + B\vu_b - S\vu_s \\
                 \vu_r \\
                 \vu_b \\
                 \vu_s \\
           \end{array}\hspace{-1mm}\right]
           \middle|
           \begin{array}{c}
             \vu_r, \vu_b \ge \vzero ,\, \vu_s \in [\vzero,\vD_{t+1}], \\
             F\vx_t + B\vu_b - S\vu_s \ge \vzero, \\
             F\vx_t + R\vu_r + B\vu_b \le \vk_{t+1} \\
           \end{array}
    \right\}
  \,,
\end{equation*}
for all $t = 1,\ldots,T-1$, where
$\vZ_t = \vD_{t+1} \in [0,\infty)^2$
is the beer demand, $\vk_{t+1} \in [0,\infty]^9$
is the capacity bound,
$\vzero$ denotes a zero vector of appropriate size,
and
       the fermentation matrix $F \in \{0,1\}^{9\times16}$,
       the brewing matrix $B \in \setR^{9\times2}$,
       the storage loading matrix $R \in \{0,1\}^{9\times3}$
   and the selling matrix $S \in \{0,1\}^{9\times2}$
   are defined as
\begin{equation*}
  \arraycolsep=2pt
  F \defeq \left[
           \begin{array}{c|c|c}
              I_3 & \multicolumn{2}{c}{\vzero_{3\times13}}
              \\
              \hline
              \vzero_{6\times3}\,
              &
              \begin{array}{cc}
                \begin{array}{cc}
                  0 & 0 \\
                  1 & 1 \\
                \end{array}
                & \vzero_{2\times4}
                \\
                \vzero_{4\times2}
                & \begin{array}{cccc}
                    0 & 0 & 0 & 0 \\
                    1 & 0 & 0 & 0 \\
                    0 & 1 & 0 & 0 \\
                    0 & 0 & 1 & 1 \\
                  \end{array}
              \end{array}
              &
              \,\vzero_{6\times7}
           \end{array}
           \right]
\,, \quad
\begin{array}{c}
  B \defeq \left[\begin{array}{cc}
                 \hspace{-1mm}-\vb_a & \hspace{-1mm}-\vb_l\hspace{-1mm} \\
                 1 & 0 \\
                 0 & 0 \\
                 0 & 1 \\
                 \hline
                 \multicolumn{2}{c}{\vzero_{3\times2}} \\
           \end{array}\right] \,,
  \\
  \\
  R \defeq \left[\begin{array}{c}
              \Id_3 \\
              \vzero_{6\times3} \\
           \end{array}\right] \,,
  \end{array}
  \quad
  S \defeq \left[
             \begin{array}{c}
               \vzero_{3\times2} \\
               \hline
               \begin{array}{cc}
                 0 & 0 \\
                 1 & 0 \\
                 0 & 0 \\
                 0 & 0 \\
                 0 & 0 \\
                 0 & 1 \\
               \end{array}
             \end{array}
             \right]
    \,,
\end{equation*}
where $\vzero_{m \times n}$ is a zero matrix of size $m \times n$,
and $\vb_a, \vb_l \in [0,\infty)^3$
are the required ingredients for brewing ales and lagers, respectively.
To initialize the system,
we use $\vx_0 \defeq \vzero$ and $\vz_0 \defeq \vzero$.

To test the fADP algorithm on this problem, we set
the horizon to $48$ weeks horizon on a fortnight basis ($T \defeq 24$).
The demand distributions $\vZ_t$
for lager and ale beers are shown on \cref{fig:bb}
\begin{figure}[h!] \begin{center}
  \input{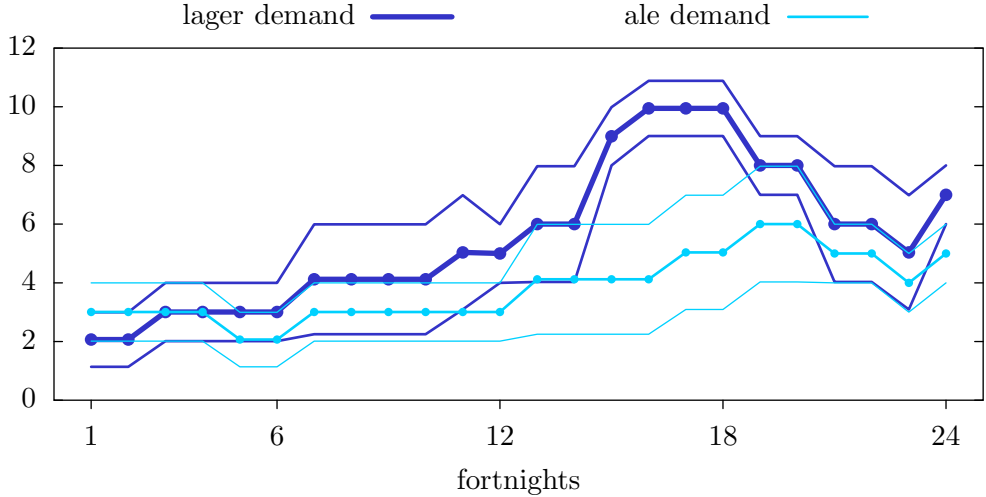}
  \vspace{2mm}
  \caption{Lager and ale beer demand distributions
           for the beer brewery optimization problem
           with mean and standard deviation are shown
           for a $48$ weeks horizon.}
  \label{fig:bb}
\end{center} \end{figure}
with mean and standard deviation.
Both distributions are truncated normal with support $[0.1,12]$.
The cost vectors are set to fixed values
for all $t = 1,\ldots,T$ as
$\vh_t \defeq [1\,\,0.2\,\,0.2\,\,1\,\,2\,\,1\,\,1\,\,1\,\,2]^\T$,
$\vc_t \defeq [20\,\,10\,\,5\,\,1\,\,1]^\T$,
and $\vr_t \defeq [90\,\,50]^\T$.
Furthermore, the ingredient requirement vectors for brewing are
$\vb_a \defeq [1\,\,1\,\,1]^\T$ and $\vb_l \defeq [0.5\,\,0.9\,\,0.8]^\T$
for ale and lager, respectively, and the capacity vector is
$\vk \defeq [10\,\,10\,\,10\,\,10\,\,\infty\,\,10\,\,\infty\,\,\infty\,\,\infty]^\T$
to ensure the feasibility (relatively complete recourse) requirements,
as discussed in \cref{sec:fADP}.

Similar to the energy optimization case,
\todob{A reasonable heuristic algorithm would be interesting here too.
       But due to order and fermentation delays, that's not so simple now.}
we use the CAP and AMAP estimators for fADP
with various trajectory set sizes $n$ and cost-to-go evaluation numbers $m$.
As a baseline, we use the optimal solution for the deterministic version
of \eqref{eq:cvx-sto-prg} where
$\vZ_t$ is replaced by its expectation $\E\vZ_t$ for all $t = 1,\ldots,T$.
The results are presented on \cref{fig:result-plan-bb},
\begin{figure}[h!] \begin{center}
  \input{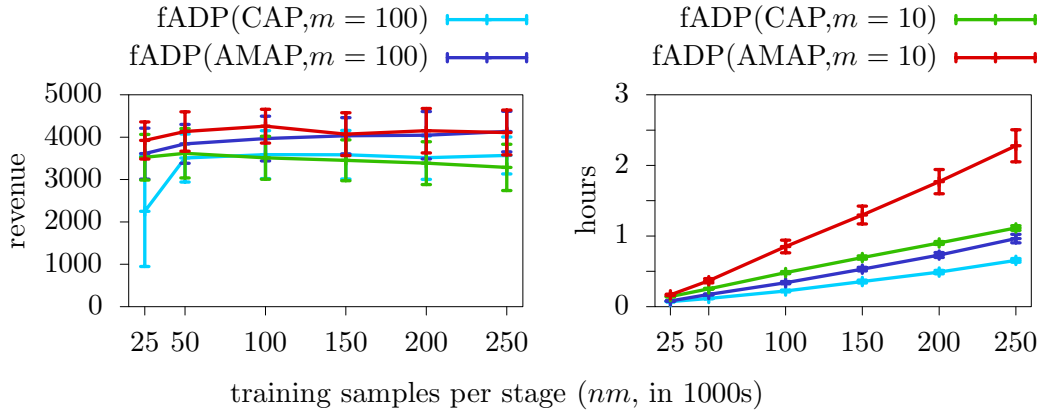}
  \caption{Beer brewery optimization results for fADP algorithm
           using AMAP and CAP convex regression to approximate the
           convex cost-to-go functions.
           Results show the revenue with mean and standard deviation,
           and the training time in minutes for trajectories $n$
           and cost-to-go evaluations $m$.}
  \label{fig:result-plan-bb}
  \vspace{-3mm}
\end{center} \end{figure}
showing that AMAP improves the performance significantly by
collecting revenue over 4100 compared to CAP which stays around 3600.

However, the result also shows that
the running time of AMAP become significantly larger than CAP.
This indicates that larger max-affine models
are trained by AMAP compared to CAP
to improve the accuracy of the cost-to-go approximations,
that increases the computational cost of the LP tasks
of \eqref{eq:sto-prog:yt}, and eventually
slows down the fADP algorithm significantly.

Finally, notice that using larger trajectory sets $n$
for AMAP provide better performance for low sample levels $nm$,
but the improved sample quality with $m = 100$
eventually achieves the same
using significantly less computational resources.
So it seems to be still possible for max-affine estimators
to find better tradeoffs between accuracy and model size.

\section{Conclusions and future work}

In this paper, we considered solving convex SP problems
by the combination of an approximate dynamic programming method (fADP)
and convex regression techniques.
For the latter, we proposed a new state-of-the-art max-affine estimator (AMAP),
and combined it with an approximate dynamic programming algorithm
to address two benchmark convex SP problems of moderate size.

Clearly, scaling up the fADP algorithm for larger problems
would require further improvements.
One of these could be using more expressive convex
piecewise-linear representations
(e.g., sum-max-affine maps),
which might compress the LP tasks enough for the current solvers.
For this, 
\cite{HannahDunson2012} considered various ensemble tehniques
(bagging, smearing, and random forests) to enhance the performance
of the CAP estimator.
However, these techniques still seem to construct too large models
to increase the accuracy significantly,
that makes the vast amount of LP tasks impractical to solve.
Maybe, LP algorithms
which can more efficiently solve large number of similar LP problems
with different right hand sides \citep[e.g.,][]{GassmannWallace1996}
could help with this issue.

But eventually, it would become inevitable
to localize cost-to-go approximations to a fraction of the decision space,
perhaps by running fADP iteratively alternating between
sampling and estimation phases,
and exploring at the boundary of the accurately approximated region
of cost-to-go functions
in order to find and avoid delayed rewards and costs.
However, this is left for future work.

\appendix
\section{Auxiliary tools}

The following result is a slight generalization of
Theorem~5.3 in \citet{Rockafellar1972}.
\begin{lemma}
\label{thm:graph-inf}
  Let $\setX, \setY$ be two convex sets
  and $f : \setX \times \setY$ be a jointly--convex function in its arguments.
  Additionally, let $C : \setX \to 2^\setY$ be a set--valued function
                for which $\graph(C)$ is convex.
  Then $g(\vx) \defeq \inf_{\vy\in C(\vx)}f(\vx,\vy)$ is a convex function.
\end{lemma}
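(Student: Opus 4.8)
The plan is to check the defining inequality of convexity for $g$ directly, using the convexity of $\graph(C)$ to manufacture feasible points out of near-minimizers. Fix $\vx_1,\vx_2\in\setX$ and $\lambda\in(0,1)$ and set $\vx_\lambda\defeq\lambda\vx_1+(1-\lambda)\vx_2$. First I would dispose of the degenerate cases: if $C(\vx_1)=\emptyset$ or $C(\vx_2)=\emptyset$ the right-hand side of the target inequality is $+\infty$ and there is nothing to show; otherwise I note that $\{\vx\in\setX:C(\vx)\neq\emptyset\}$ is precisely the image of the convex set $\graph(C)$ under the linear projection onto $\setX$, hence convex, so $C(\vx_\lambda)\neq\emptyset$ as well.

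For the substantive case, I would fix arbitrary reals $r_i>g(\vx_i)$, $i=1,2$ (phrasing the selection this way, rather than with $g(\vx_i)+\varepsilon$, avoids any trouble when $g(\vx_i)=-\infty$), and pick near-minimizers $\vy_i\in C(\vx_i)$ with $f(\vx_i,\vy_i)<r_i$. Then $(\vx_1,\vy_1),(\vx_2,\vy_2)\in\graph(C)$, so by convexity of the graph
\[
  \big(\vx_\lambda,\ \lambda\vy_1+(1-\lambda)\vy_2\big)
    \ =\ \lambda(\vx_1,\vy_1)+(1-\lambda)(\vx_2,\vy_2)\ \in\ \graph(C),
\]
i.e.\ $\vy_\lambda\defeq\lambda\vy_1+(1-\lambda)\vy_2\in C(\vx_\lambda)$. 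Feasibility of $\vy_\lambda$ together with joint convexity of $f$ then gives
\[
  g(\vx_\lambda)\ \le\ f(\vx_\lambda,\vy_\lambda)
    \ \le\ \lambda f(\vx_1,\vy_1)+(1-\lambda)f(\vx_2,\vy_2)
    \ <\ \lambda r_1+(1-\lambda)r_2,
\]
and letting $r_1\downarrow g(\vx_1)$ and $r_2\downarrow g(\vx_2)$ yields $g(\vx_\lambda)\le\lambda g(\vx_1)+(1-\lambda)g(\vx_2)$, which is the required inequality.

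I do not expect any real obstacle here: the single idea is that convexity of $\graph(C)$ is exactly the statement that a convex combination of feasible pairs stays feasible, after which joint convexity of $f$ closes the argument. The only delicate point is the bookkeeping with extended-real values — the empty-valued case and the possibility $g(\vx)=-\infty$ — which is why I would treat the effective domain of $g$ separately at the outset and state the near-minimizer selection in terms of real upper bounds $r_i$. If one additionally wanted $g$ to be proper (never $-\infty$), that would need an extra hypothesis such as $f$ being bounded below or a closedness condition on $C$, but the convexity assertion as stated requires nothing beyond the given hypotheses.
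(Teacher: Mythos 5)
Your proof is correct and follows essentially the same route as the paper's: convexity of $\graph(C)$ shows that a convex combination of feasible pairs is feasible at $\lambda\vx_1+(1-\lambda)\vx_2$, and joint convexity of $f$ finishes the argument. The paper phrases this as a chain of inequalities over nested infima rather than extracting near-minimizers $r_i > g(\vx_i)$, but that is only a presentational difference; your extra care with the empty-valued and $-\infty$ cases is a welcome tightening rather than a departure.
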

\begin{proof}
  Let $\vx_1, \vx_2 \in \setX$, $\vy_1, \vy_2 \in \setY$
  and $\lambda \in (0,1)$. As $\graph(C)$ is convex,
  \begin{equation*}
    \vy_1 \in C(\vx_1), \vy_2 \in C(\vx_2)
    \quad \Rightarrow \quad
    \lambda\vy_1 + (1-\lambda)\vy_2
      \in C\big(\lambda\vx_1 + (1-\lambda)\vx_2\big)
    \,.
  \end{equation*}
  Using this with the fact that the infimum on a subset becomes larger, 
  and the joint--convexity of $f$, we get
  \begin{equation*} \begin{split}
    g\big(\lambda\vx_1 + (1-\lambda)\vx_2\big)
    &= \inf_{\vz \in C(\lambda\vx_1 + (1-\lambda)\vx_2)}
       f\big(\lambda\vx_1 + (1-\lambda)\vx_2, \vz\big)
    \\
    &\le \inf_{\vy_1 \in C(\vx_1)} \inf_{\vy_2 \in C(\vx_2)}
         f\big(\lambda\vx_1 + (1-\lambda)\vx_2,
               \lambda\vy_1 + (1-\lambda)\vy_2\big)
    \\
    &\le \inf_{\vy_1 \in C(\vx_1)} \inf_{\vy_2 \in C(\vx_2)}
         \lambda f(\vx_1,\vy_1) + (1-\lambda) f(\vx_2,\vy_2)
    \\
    &= \lambda g(\vx_1) + (1-\lambda) g(\vx_2)
    \,,
  \end{split}\end{equation*}
  which proves the convexity of $g$.
\end{proof}

\bibliography{refs}

\end{document}